\documentclass[final]{dmtcs-episciences}


\usepackage[utf8]{inputenc}
\usepackage{subfigure}

%
\newtheorem {thm} {Theorem}[section] 
\newtheorem {cor}[thm]{Corollary}            
\newtheorem {prp}[thm]{Proposition}            


\author{Hao Lin\affiliationmark{1}
  \and Xiumei Wang\affiliationmark{2}
  }
\title[Formatting an article for DMTCS]{Three matching intersection property for\\
matching covered graphs\thanks{Supported by  NSFC (11571323) and NSF-Henan (15IRTSTHN006).}}
\affiliation{
  School of Science, Henan University of Technology, Zhengzhou 450001, China\\
  School of Mathematics and Statistics, Zhengzhou University, Zhengzhou 450001, China}
\keywords{matching-covered graph, Fan-Raspaud's conjecture, 3PM-admissible graph}
\received{2017-6-22}
\accepted{2017-12-12}
\publicationdetails{19}{2017}{3}{16}{3728}
\begin{document}
\maketitle
\begin{abstract}
 In connection with Fulkerson's conjecture
on cycle covers, Fan and Raspaud proposed a weaker conjecture: For
every bridgeless cubic graph $G$, there are three perfect matchings
$M_1$, $M_2$, and $M_3$ such that $M_1\cap M_2 \cap M_3=\emptyset$.
We call the property specified in this conjecture \emph {the three
matching intersection property} (and \emph {3PM property} for short).
We study this property on matching covered graphs. The main results are a
necessary and sufficient condition and its applications to
characterization of special graphs, such as the Halin graphs
and $4$-regular graphs.
\end{abstract}

\section{Introduction}
\label{sec:in}
Fulkerson's conjecture asserts that every bridgeless cubic graph has
six perfect matchings such that each edge appears in exactly two of
them (cf. \cite{Bondy08,Fan92,Jaeger88}). If we take three of these
six perfect matchings, then each edge appears in at most two of
them. This motivates the following weaker conjecture proposed by Fan
and Raspaud \cite{Fan94}: In every bridgeless cubic graph there
exist three perfect matchings $M_1$, $M_2$, and $M_3$ such that
$M_1\cap M_2 \cap M_3=\emptyset$. For brevity, this conjecture is
referred to as the {\it three matching intersection conjecture} or
\emph { 3PM conjecture}.

A graph is said to be \emph {matching covered} if it is connected and
each edge is contained in a perfect matching. Note that every
bridgeless cubic graph is matching covered (or 1-extendable in
\cite{Lovasz86}). So we generally discuss the matching covered
graphs below. In a viewpoint of generalization to the 3PM conjecture, we propose the following.

{\bf Definition 1.1.} \ A  matching covered graph $G$ is
called a \emph { 3PM-admissible graph} (or $G$ admits the 3PM property)
if there exist three perfect matchings $M_1$, $M_2$, and $M_3$ of
$G$ such that $M_1\cap M_2\cap M_3=\emptyset$.

Our goal is to characterize  3PM-admissible graphs.
Within the realm of cubic graphs, this amounts to the 3PM conjecture.
Many 3PM-admissible cubic graphs have been found to support this
conjecture, such as the 3-edge-colourable cubic graphs (including
bipartite graphs, hamiltonian  graphs), the cubic graphs with
independent perfect matching polytope $P(G)$ or with low dimension
perfect matching polytope (see \cite{Carvalho05, Wang13}). Here, a cubic graph $G$ is 3-edge-colorable if there are three perfect matchings of $G$ which form a partition of $E(G)$. Some basic cubic graphs are shown in Figure 1, which are 3PM-admissible.

\begin{center}
\setlength{\unitlength}{0.3cm}
\begin{picture}(40,9)
\multiput(0,2)(4,0){2}{\circle*{0.3}}
\multiput(2,4)(0,0){1}{\circle*{0.3}}
\multiput(2,6)(0,0){1}{\circle*{0.3}}

\multiput(16,2)(6,0){2}{\circle*{0.3}}
\multiput(18,4)(2,0){2}{\circle*{0.3}}
\multiput(16,6)(6,0){2}{\circle*{0.3}}

\multiput(24,2)(6,0){2}{\circle*{0.3}}
\multiput(26,4)(1,0){3}{\circle*{0.3}}
\multiput(24,6)(6,0){2}{\circle*{0.3}}
\multiput(27,8)(0,0){1}{\circle*{0.3}}

\put(0,2){\line(1,0){4}} \put(2,4){\line(0,1){2}}
\put(0,2){\line(1,1){2}} \put(2,4){\line(1,-1){2}}
\put(0,2){\line(1,2){2}} \put(4,2){\line(-1,2){2}}

\put(16,2){\line(1,0){6}} \put(18,4){\line(1,0){2}}
\put(16,2){\line(0,1){4}} \put(22,2){\line(0,1){4}}
\put(16,6){\line(1,0){6}} \put(18,4){\line(-1,-1){2}}
\put(18,4){\line(-1,1){2}} \put(20,4){\line(1,1){2}}
\put(20,4){\line(1,-1){2}}

\put(24,6){\line(3,2){3}} \put(30,6){\line(-3,2){3}}
\put(24,2){\line(1,0){6}} \put(26,4){\line(1,0){2}}
\put(24,2){\line(0,1){4}} \put(30,2){\line(0,1){4}}
\put(27,4){\line(0,1){4}} \put(26,4){\line(-1,-1){2}}
\put(26,4){\line(-1,1){2}} \put(28,4){\line(1,1){2}}
\put(28,4){\line(1,-1){2}}

\multiput(33.3,2)(4.4,0){2}{\circle*{0.3}}
\multiput(34.1,3.2)(2.8,0){2}{\circle*{0.3}}
\multiput(33.4,5.5)(4.2,0){2}{\circle*{0.3}}
\multiput(32,6)(7,0){2}{\circle*{0.3}}
\multiput(35.5,7)(1,0){1}{\circle*{0.3}}
\multiput(35.5,8.5)(1,0){1}{\circle*{0.3}}

\put(33.3,2){\line(1,0){4.4}} \put(33.4,5.5){\line(1,0){4.2}}
\put(35.5,7){\line(0,1){1.5}} \qbezier(33.3,2)(32.65,4)(32,6)
\qbezier(32,6)(33.75,7.25)(35.5,8.5)
\qbezier(35.5,8.5)(37.25,7.25)(39,6) \qbezier(39,6)(38.35,4)(37.7,2)
\qbezier(34.1,3.2)(34.8,5.1)(35.5,7)
\qbezier(36.9,3.2)(36.2,5.1)(35.5,7)
\qbezier(34.1,3.2)(35.85,4.35)(37.6,5.5)
\qbezier(33.4,5.5)(35.15,4.35)(36.9,3.2)
\qbezier(32,6)(32.7,5.75)(33.4,5.5)
\qbezier(37.6,5.5)(38.3,5.75)(39,6)
\qbezier(33.3,2)(33.7,2.6)(34.1,3.2)
\qbezier(36.9,3.2)(37.3,2.6)(37.7,2)

\multiput(8,2)(4,0){2}{\circle*{0.3}}
\multiput(6,5)(8,0){2}{\circle*{0.3}}
\multiput(8,8)(4,0){2}{\circle*{0.3}}

\put(8,2){\line(1,0){4}} \put(8,8){\line(1,0){4}}
\put(6,5){\line(2,3){2}} \put(6,5){\line(2,-3){2}}
\put(14,5){\line(-2,3){2}} \put(14,5){\line(-2,-3){2}}
\qbezier(8,2)(12,4)(12,8) \qbezier(8,8)(8,4)(12,2)
\qbezier(6,5)(10,7)(14,5)

\put(0,0.5){\makebox(1,0.5)[l]{\small (a) $K_4$ }}
\put(7,0.5){\makebox(1,0.5)[l]{\small (b) $K_{3,3}$}}
\put(15,0.5){\makebox(1,0.5)[l]{\small (c) $3$-prism $B_6$}}
\put(25,0.5){\makebox(1,0.5)[l]{\small (d) $B_8$}}
\put(31,0.5){\makebox(1,0.5)[l]{\small (e) Petersen graph}}
\put(12,-1.7){\makebox(1,0.5)[l]{\small Figure 1. Some important cubic
graphs}}
\end{picture}
\end{center}

\vspace{0.2cm}

Furthermore,  apart from those cubic graphs, there are more 3PM-admissible
matching covered graphs. For example, a
\emph { wheel} $W_n$ is a cycle $C_n$ with every vertex joining to a
single vertex, the \emph { hub}.  When $n$
is odd, $W_n$ is called an odd wheel, which is matching covered (see
Figure 2(a)). The wheels form a basic family of 3-connected graphs
in the sense that every 3-connected graph can be constructed from a
wheel via some kind of operations (see Tutte's theorem in
\cite{Bondy08}). When performing an `expansion' at the hub of a
wheel, we can obtain another matching covered graph, called the \emph {
double wheel}. An example is shown in Figure 2(b). Moreover, the tetrahedron $K_4$, the cube
$Q_3$, the dodecahedron, the octahedron and the icosahedron, which are well-known \emph { platonic graphs},
are matching covered and the last two are not cubic \cite{Bondy08}.
Here the octahedron is shown in Figure 2(c), and the icosahedron is
shown in Figure 3. To see that these graphs are 3PM-admissible, we
define the perfect matchings $M_i$ for $1\leq i\leq 3$ in
Figures 2 and 3, where $M_i$ is represented by the edges with label
$i$ at the edges.

\begin{center}
\setlength{\unitlength}{0.4cm}
\begin{picture}(28,9.5)

\multiput(2,3)(4,0){2}{\circle*{0.3}}
\multiput(0,5)(8,0){2}{\circle*{0.3}}
\multiput(4,6)(0,0){1}{\circle*{0.3}}
\multiput(1,8)(6,0){2}{\circle*{0.3}}
\multiput(4,9)(0,0){1}{\circle*{0.3}}

\put(2,3){\line(1,0){4}} \put(0,5){\line(4,1){4}}
\put(8,5){\line(-4,1){4}} \put(0,5){\line(1,-1){2}}
\put(0,5){\line(1,3){1}} \put(1,8){\line(3,1){3}}
\put(4,9){\line(3,-1){3}} \put(8,5){\line(-1,3){1}}
\put(6,3){\line(1,1){2}} \put(4,6){\line(0,1){3}}
\put(4,6){\line(3,2){3}} \put(4,6){\line(-3,2){3}}
\put(4,6){\line(2,-3){2}} \put(4,6){\line(-2,-3){2}}

\put(2.5,8.8){\makebox(1,0.5)[l]{\footnotesize $3$ }}
\put(5.2,8.8){\makebox(1,0.5)[l]{\footnotesize $2$}}
\put(0,6.6){\makebox(1,0.5)[l]{\footnotesize $1$}}
\put(2.5,7){\makebox(1,0.5)[l]{\footnotesize $2$}}
\put(4.2,7.5){\makebox(1,0.5)[l]{\footnotesize $1$}}
\put(5,7){\makebox(1,0.5)[l]{\footnotesize $3$}}
\put(7.6,6.6){\makebox(1,0.5)[l]{\footnotesize $1$}}
\put(1,3.8){\makebox(1,0.5)[l]{\footnotesize $23$ }}
\put(7,3.8){\makebox(1,0.5)[l]{\footnotesize $23$}}
\put(3.8,3.2){\makebox(1,0.5)[l]{\footnotesize $1$}}

\multiput(13,3)(2,0){2}{\circle*{0.3}}
\multiput(11,5)(6,0){2}{\circle*{0.3}}
\multiput(13,6)(2,0){2}{\circle*{0.3}}
\multiput(11,7)(6,0){2}{\circle*{0.3}}
\multiput(13,9)(2,0){2}{\circle*{0.3}}

\put(13,3){\line(1,0){2}} \put(13,6){\line(1,0){2}}
\put(13,9){\line(1,0){2}} \put(11,5){\line(0,1){2}}
\put(13,3){\line(0,1){6}} \put(15,3){\line(0,1){6}}
\put(17,5){\line(0,1){2}} \put(11,5){\line(2,1){2}}
\put(11,5){\line(1,-1){2}} \put(11,7){\line(1,1){2}}
\put(11,7){\line(2,-1){2}} \put(17,5){\line(-2,1){2}}
\put(17,5){\line(-1,-1){2}} \put(17,7){\line(-1,1){2}}
\put(17,7){\line(-2,-1){2}}

\put(13.8,9.1){\makebox(1,0.5)[l]{\footnotesize $1$}}
\put(11.6,8){\makebox(1,0.5)[l]{\footnotesize $2$}}
\put(16,8){\makebox(1,0.5)[l]{\footnotesize $2$}}
\put(13.1,7){\makebox(1,0.5)[l]{\footnotesize $3$}}
\put(14.5,7){\makebox(1,0.5)[l]{\footnotesize $3$}}
\put(11.8,6.5){\makebox(1,0.5)[l]{\footnotesize $1$}}
\put(16,6.5){\makebox(1,0.5)[l]{\footnotesize $1$}}
\put(10.4,5.8){\makebox(1,0.5)[l]{\footnotesize $3$}}
\put(17.2,5.8){\makebox(1,0.5)[l]{\footnotesize $3$}}

\put(12,5){\makebox(1,0.5)[l]{\footnotesize $2$}}
\put(15.8,5){\makebox(1,0.5)[l]{\footnotesize $2$}}
\put(11.8,3.5){\makebox(1,0.5)[l]{\footnotesize $1$}}
\put(16,3.5){\makebox(1,0.5)[l]{\footnotesize $1$}}
\put(13.4,3.1){\makebox(1,0.5)[l]{\footnotesize $23$}}

\multiput(20,3)(8,0){2}{\circle*{0.3}}
\multiput(24,4.5)(0,0){1}{\circle*{0.3}}
\multiput(23,6)(2,0){2}{\circle*{0.3}}
\multiput(24,9)(0,0){1}{\circle*{0.3}}

\put(20,3){\line(1,0){8}} \put(23,6){\line(1,0){2}}
\put(23,6){\line(2,-3){1}} \put(25,6){\line(-2,-3){1}}
\put(23,6){\line(1,3){1}} \put(25,6){\line(-1,3){1}}
\put(20,3){\line(2,3){4}} \put(20,3){\line(1,1){3}}
\put(28,3){\line(-2,3){4}} \put(28,3){\line(-1,1){3}}
\qbezier(20,3)(22,3.75)(24,4.5) \qbezier(28,3)(26,3.75)(24,4.5)

\put(21.6,6){\makebox(1,0.5)[l]{\footnotesize $1$ }}
\put(23.5,7.2){\makebox(1,0.5)[l]{\footnotesize $2$}}
\put(26,6){\makebox(1,0.5)[l]{\footnotesize $3$}}
\put(23.8,6.2){\makebox(1,0.5)[l]{\footnotesize $1$}}
\put(21.6,4.2){\makebox(1,0.5)[l]{\footnotesize $3$}}
\put(23.6,3.2){\makebox(1,0.5)[l]{\footnotesize $2$}}
\put(24.3,4.8){\makebox(1,0.5)[l]{\footnotesize $23$ }}
\put(25.6,3.8){\makebox(1,0.5)[l]{\footnotesize $1$}}

\put(1.5,1.5){\makebox(1,0.5)[l]{\small (a) Wheel }}
\put(10.6,1.5){\makebox(1,0.5)[l]{\small (b) Double wheel}}
\put(20.5,1.5){\makebox(1,0.5)[l]{\small (c) Octahedron}}
\put(7,0){\makebox(1,0.5)[l]{\small Figure 2. Examples of
3PM-admissible graphs}}
\end{picture}
\end{center}

We can see from the above examples that in addition to the cubic
graphs, there would be many 3PM-admissible matching covered graphs. In this paper, we consider the
characterization of 3PM-admissibility for matching covered graphs.
Especially, we are concerned with several special classes of
matching covered graphs, such as the platonic graphs, wheels, Halin
graphs, outerplanar graphs, 4-regular graphs on
small size.

\begin{center}
\setlength{\unitlength}{0.4cm}
\begin{picture}(12,14)

\multiput(0,2)(12,0){2}{\circle*{0.3}}
\multiput(6,4)(0,0){1}{\circle*{0.3}}
\multiput(4,6)(2,0){3}{\circle*{0.3}}
\multiput(5,8)(2,0){2}{\circle*{0.3}}
\multiput(4,8)(4,0){2}{\circle*{0.3}}
\multiput(6,10)(0,4){2}{\circle*{0.3}}

\put(0,2){\line(1,0){12}} \put(4,6){\line(1,0){4}}
\put(4,8){\line(1,0){4}} \put(6,10){\line(0,1){4}}
\put(4,6){\line(0,1){2}} \put(8,6){\line(0,1){2}}
\put(0,2){\line(1,2){6}} \put(0,2){\line(2,3){4}}
\put(0,2){\line(1,1){4}} \put(0,2){\line(3,1){6}}
\put(12,2){\line(-1,2){6}} \put(12,2){\line(-2,3){4}}
\put(12,2){\line(-1,1){4}} \put(12,2){\line(-3,1){6}}
\put(6,4){\line(0,1){2}} \put(4,8){\line(1,3){2}}
\put(4,8){\line(1,1){2}} \put(8,8){\line(-1,3){2}}
\put(8,8){\line(-1,1){2}} \put(4,6){\line(1,2){2}}
\put(8,6){\line(-1,2){2}} \put(5,8){\line(1,-2){1}}
\put(7,8){\line(-1,-2){1}} \put(4,6){\line(1,-1){2}}
\put(8,6){\line(-1,-1){2}}

\put(6.1,11){\makebox(1,0.5)[l]{\footnotesize $1$ }}
\put(5,10.5){\makebox(1,0.5)[l]{\footnotesize $3$}}
\put(7,10.5){\makebox(1,0.5)[l]{\footnotesize $2$}}
\put(2.6,6.2){\makebox(1,0.5)[l]{\footnotesize $1$}}
\put(3,4.4){\makebox(1,0.5)[l]{\footnotesize $2$}}
\put(3.4,3.2){\makebox(1,0.5)[l]{\footnotesize $3$}}
\put(9,6.2){\makebox(1,0.5)[l]{\footnotesize $1$ }}
\put(8.8,4.2){\makebox(1,0.5)[l]{\footnotesize $3$}}
\put(8.6,3.2){\makebox(1,0.5)[l]{\footnotesize $2$}}

\put(5.6,8.1){\makebox(1,0.5)[l]{\footnotesize $23$ }}
\put(4.5,6.5){\makebox(1,0.5)[l]{\footnotesize $1$}}
\put(6.5,6.5){\makebox(1,0.5)[l]{\footnotesize $1$}}
\put(2.6,6.2){\makebox(1,0.5)[l]{\footnotesize $1$}}
\put(4.9,5.5){\makebox(1,0.5)[l]{\footnotesize $3$}}
\put(6.5,5.5){\makebox(1,0.5)[l]{\footnotesize $2$}}
\put(6.8,8.9){\makebox(1,0.5)[l]{\footnotesize $3$}}
\put(4.7,8.9){\makebox(1,0.5)[l]{\footnotesize $2$}}
\put(6.8,4.7){\makebox(1,0.5)[l]{\footnotesize $1$}}

\put(2,0){\makebox(1,0.5)[l]{\small Figure 3. The icosahedron}}
\end{picture}
\end{center}

The organization of the paper is as follows. In Section 2, we
present a necessary and sufficient condition and its consequences.
Section 3 is
dedicated to the 4-regular graphs. We give a short summary in
Section 4. We shall follow the graph-theoretic terminology and
notation of \cite{Bondy08}.

\section{Basic theorems}
\label{sec: basic}

Throughout the paper, we consider $G$ as a matching covered graph. So $G$ has a perfect matching and has even number of vertices.

Matching covered graphs have a basic property (see \cite{Lovasz86}):
If $G'$ results from $G$ by subdividing an edge with two vertices,
then $G'$ is matching covered if and only if $G$ is matching
covered. For this, the subdivision from $G$ to $G'$ is called a \emph {
bisubdivision}. A graph results from $G$ by performing several times
of this kind of operations is also called a bisubdivision of $G$. On
the other hand, the inverse operation, namely, replacing a path of $G'$ whose length is three and whose internal vertices have degree two in $G'$ by an edge, is called a \emph { bicontraction}. The resulting
graph obtained from $G'$ by performing several times of this kind of
operations is also called a bicontraction of $G'$.

A spanning subgraph $G'$ of $G$ is called a \emph { 2-factor} if every
vertex of $G'$ has degree two. We have the following basic
criterion.

\begin{thm}\label{thm:2.1}
 A graph $G$ is 3PM-admissible if and only if (1) $G$
has a 2-factor $G'$ with even components, or (2) $G$ has a spanning
subgraph $G'$ which is a bisubdivision of a 3-edge-colorable cubic graph.
\end{thm}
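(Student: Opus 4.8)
The plan is to treat the two implications separately, reading the whole statement through the \emph{support} $H=M_1\cup M_2\cup M_3$ of a triple of perfect matchings. For the ``if'' direction both constructions are explicit. If $G$ has a $2$-factor $G'$ whose components are even cycles, I would $2$-colour the edges of each cycle; the two colour classes give disjoint perfect matchings $M_1,M_2$ of $G$ (spanning, as $G'$ is), and any perfect matching $M_3$ (one exists since $G$ is matching covered) yields $M_1\cap M_2\cap M_3\subseteq M_1\cap M_2=\emptyset$. If instead $G$ has a spanning bisubdivision $G'$ of a $3$-edge-colourable cubic graph $C$, I would fix a proper $3$-edge-colouring $F_1,F_2,F_3$ of $C$ and lift it: an unsubdivided edge of colour $i$ is put into $M_i$; an edge of colour $i$ subdivided into an odd path $v_0v_1\cdots v_{2k+1}$ contributes its ``odd'' edges $v_0v_1,v_2v_3,\dots$ to $M_i$ and its ``even'' edges $v_1v_2,v_3v_4,\dots$ to each $M_\ell$ with $\ell\neq i$. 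One checks each $M_\ell$ is a perfect matching of $G'\subseteq G$, that the even edges of a subdivided path lie in exactly the two matchings other than $M_i$, and that every other edge lies in exactly one matching; hence no edge is common to all three and $M_1\cap M_2\cap M_3=\emptyset$.

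For the ``only if'' direction I would start from a witness $M_1,M_2,M_3$ and analyse $H$ with edge multiplicities $m_e=|\{i:e\in M_i\}|$. The hypothesis forces $m_e\in\{1,2\}$, and since each $M_i$ covers every vertex once, the multiplicities at each vertex sum to $3$; thus every vertex has degree $2$ or $3$ in $H$, the edges of multiplicity $2$ form a matching $N$, and a degree-$2$ vertex is incident to exactly one $N$-edge and one edge of multiplicity $1$. Colouring each multiplicity-$1$ edge by the unique $M_i$ containing it, one sees that the three edges at a degree-$3$ vertex receive the three distinct colours, while at a degree-$2$ vertex the multiplicity-$1$ edge carries the colour ``missing'' from its $N$-edge. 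Two pure cases are then immediate: if $N=\emptyset$ then $H$ is a cubic spanning subgraph whose colouring is proper, i.e. a $3$-edge-colourable cubic graph, giving (2); and if every vertex has degree $2$ then $H$ is a spanning $2$-factor, each component of which alternates between two matchings and is therefore an even cycle, giving (1).

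The crux is the mixed case, where $H$ has both degree-$2$ and degree-$3$ vertices. Here I would first show, tracing the forced alternation of $N$-edges and multiplicity-$1$ edges along a maximal path of degree-$2$ vertices, that such a path joins two degree-$3$ vertices, has odd length, and carries a single colour throughout; consequently suppressing all degree-$2$ vertices turns the degree-$3$ part of $H$ into a cubic (multi)graph $\tilde H$ whose inherited colouring is a proper $3$-edge-colouring (no loop or monochromatic parallel pair can appear), so that this part of $H$ is exactly a bisubdivision of a $3$-edge-colourable cubic graph. The genuine obstacle is that $H$ may in addition contain components that are entirely degree-$2$, i.e. even cycles, which fit neither a $2$-factor conclusion (the degree-$3$ vertices rule that out) nor a cubic bisubdivision (a cycle is not cubic). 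I expect this to be the main difficulty, and the plan is to dispose of such components before suppressing: each even-cycle component admits its two complementary perfect matchings $P,Q$, and which $M_\ell$ uses $Q$ on it is free, so I would either re-choose these ``splits'' consistently to make two of the matchings globally disjoint, landing in (1), or, when the subdivided paths already force all three pairwise intersections to be nonempty, exploit the connectivity of $G$ to push an alternating path from the cycle into the degree-$3$ part and thereby absorb the cycle into the bisubdivision, landing in (2). Making this last reduction rigorous — showing that one of the two outcomes can always be reached — is the step I expect to require the most care.
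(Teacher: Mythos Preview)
Your argument closely parallels the paper's: analyse the support $G'=M_1\cup M_2\cup M_3$, observe that every vertex has degree $2$ or $3$, dispose of the two pure cases directly, and in the mixed case suppress the degree-$2$ vertices by bicontraction to reach a $3$-edge-colourable cubic graph. The ``if'' direction is essentially identical (the paper simply takes $M_3=M_2$ rather than an arbitrary third matching, which is immaterial).

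Where you go beyond the paper is in flagging the possibility that $G'$ has entire components consisting only of degree-$2$ vertices (even cycles) alongside components containing degree-$3$ vertices. You are right that this is a real obstruction: such a cycle component cannot be bicontracted down to anything cubic (the procedure stalls at a digon), so $G'$ itself is neither a $2$-factor nor a bisubdivision of a cubic graph. The paper's proof does \emph{not} address this; it simply asserts that iterated bicontraction terminates in a cubic graph $H$ with $G'$ a bisubdivision of $H$, which fails precisely when a cycle component is present. In that sense you have isolated a subtlety that the paper's own proof glosses over.

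Your proposed remedy, however, is not yet a proof. The first branch---re-splitting each cycle component so as to make two of the $M_i$ globally disjoint---cannot succeed in general: re-splitting only changes the matchings on the cycle components, while on a component containing degree-$3$ vertices the pairwise intersections $M_i\cap M_j$ are untouched and may all three be nonempty (for instance when that component has subdivided paths of all three colours). The second branch---exploiting the connectivity of $G$ to absorb a stray even cycle into the bisubdivided cubic part via an alternating path---is plausible in spirit, but you give no concrete mechanism, and it is not clear how a single $G$-edge between the cycle and the rest can be promoted into a new witness whose support has no cycle component. So the step you yourself flag as requiring the most care is genuinely the missing piece, in your proposal and in the paper's argument alike.
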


\begin{proof}  If $G$ is 3PM-admissible, then there exist three
perfect matchings $M_1$, $M_2$, and $M_3$ such that $M_1\cap M_2
\cap M_3=\emptyset$. Consider the spanning subgraph $G'=G[M_1\cup
M_2 \cup M_3]$. Note that the maximum degree of $G'$ is at most three. If every vertex of $G'$ has degree two, then $G'$ is
a 2-factor and so each of its components is a cycle. Since each
$M_i$ ($1\leq i\leq 3$) is a perfect matching, these
cycle components must be $(M_i, M_j)$-alternating cycles, where $1\leq i, j\leq 3$ and $i\neq j$. Thus they have
even number of edges. Hence (1) holds. Otherwise, $G'$ has vertices
of degree three. If every vertex of $G'$ has degree three, then $G'$
is a cubic graph with edge set $M_1\cup
M_2 \cup M_3$  and so is 3-edge-colorable. If this is not the case, then
$G'$ has vertices of degree two. Suppose that a vertex $u$ has
degree two and it is incident with two edges $xu$ and $uv$. Without
loss of generality, assume that $xu\in M_1$ and $uv\in M_2\cap M_3$.
Then $v$ must be incident with an edge $vy\in M_1$.  Thus $xuvy$ is a path of $G'$ whose length is three and whose internal vertices have degree two in $G'$. Replacing this path by an edge $xy$, we get a bicontraction $H'$ of $G'$. Moreover, $(M_1\setminus \{ux, vy\})\cup \{xy\}$, $M_2\setminus\{uv\}$ and $M_3\setminus\{uv\}$ are three perfect matchings of $H'$ with
empty intersection. If there are more  vertices of degree two, then we can
repeatedly perform this kind of bicontractions. As a result, we
finally obtain a cubic graph $H$, and $G'$ is a bisubdivision of
$H$. Furthermore, $H$ is 3-edge colorable. Hence (2) holds.

Conversely, if (1) holds, then $G$ has a 2-factor $G'$ with even
components. Here, each component of $G'$ is an even cycle. So we can
define perfect matchings $M_1$ and $M_2$ of $G'$ by making each even cycle in $G'$ to be an
$(M_1,M_2)$-alternating cycle. Further, let $M_3:=M_2$. In this way,
we obtain three perfect matchings $M_1$, $M_2$, and $M_3$ with
$M_1\cap M_2 \cap M_3=\emptyset$.

On the other hand, if (2) holds,
then $G$ has a spanning subgraph $G'$ which is a bisubdivision of a 3-edge-colorable cubic graph, say $H$. So $H$ has three perfect
matchings which cover $E(H)$ and whose intersection is empty.
We can extend these three perfect matchings
to $G'$ as follows. Suppose that $H'$ is a graph whose edge set is covered by three perfect matchings $M_1$, $M_2$, and $M_3$ with $M_1\cap M_2 \cap
M_3=\emptyset$. Initially, $H':=H$.
Suppose that we have made a
bisubdivision of $H'$ on $xy$ by subdividing it with two vertices $u$ and $v$. The resulting graph is also denoted by $H'$. Since $M_1\cap M_2 \cap M_3=\emptyset$, suppose, without loss of generality, that $xy\in M_1$ and $xy\notin M_3$.
If $xy\in M_1\setminus M_2$, then we
delete $xy$ from $M_1$, add $xu,vy$ into $M_1$, and add $uv$ into
$M_2\cap M_3$. If $xy\in M_1\cap M_2$, then we delete $xy$ from
$M_1\cap M_2$, add $xu,vy$ into $M_1\cap M_2$, and add $uv$ into
$M_3$. Then $M_1\cup M_2 \cup M_3=E(H')$ and $M_1\cap M_2 \cap M_3=\emptyset$.
By this procedure, we construct three perfect matchings
$M_1$, $M_2$, and $M_3$ in $G'$ (and thus in $G$) such that $M_1\cup M_2 \cup M_3=E(G')$ and $M_1\cap
M_2 \cap M_3=\emptyset$. This completes the proof. \end{proof}

In condition (2) of this theorem, the cubic graph $H$ is called the
\emph { cubic skeleton} of $G$. As we know, a graph is a \emph { minor} of $G$
if it can be obtained from $G$ by a sequence of deleting vertices
or edges, and contracting edges. So the cubic skeleton $H$ is in
fact a minor of $G$, a cubic minor.

\begin{cor}\label{cor:2.2}
If $G$ is an odd wheel, a double wheel with
even number of vertices, or the octahedron, then $G$ is
3PM-admissible.
\end{cor}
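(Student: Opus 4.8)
The plan is to establish condition (1) of Theorem~\ref{thm:2.1} in every case: it suffices to exhibit, for each graph, a $2$-factor all of whose components are even cycles. The unifying observation is that each of the three graphs has an \emph{even} number of vertices and is \emph{Hamiltonian}. A Hamilton cycle is a $2$-factor consisting of a single cycle whose length equals the number of vertices; when that number is even, the cycle is even, so the Hamilton cycle is itself a $2$-factor with even components, and Theorem~\ref{thm:2.1}(1) applies.

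First I would treat the odd wheel $W_n$ with $n$ odd. Its order is $n+1$, which is even. Taking all rim edges except one, say $v_1v_2$, together with the two spokes $hv_1$ and $hv_2$, produces the cycle $h\,v_1\,v_n\,v_{n-1}\cdots v_2\,h$ through all $n+1$ vertices; since $n+1$ is even this is an even cycle. Next, for a double wheel with hubs $h_1,h_2$ and rim $v_1v_2\cdots v_n$, the order is even by hypothesis, so $n$ is even; inserting the two hubs into the rim in place of two deleted rim edges yields the Hamilton cycle $v_1\,v_2\cdots v_j\,h_2\,v_{j+1}\cdots v_n\,h_1\,v_1$, again an even cycle. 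Finally, the octahedron has $6$ vertices and is Hamiltonian (for instance the $6$-cycle through its three vertex-pairs alternately), so any of its Hamilton cycles is an even cycle. In each case the Hamilton cycle is a $2$-factor with even components, whence $3$PM-admissibility follows.

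The argument is essentially routine once the reduction to even-order Hamiltonicity is made. The only point requiring a little care is the double wheel, where one must confirm that both hubs can be simultaneously threaded into the rim (equivalently, that after deleting two suitable rim edges the two resulting paths can each be bridged by a hub). This is immediate from each hub's adjacency to the rim vertices, so I do not expect a genuine obstacle; alternatively, one could simply invoke the explicit matchings $M_1,M_2,M_3$ already displayed in Figures~2 and~3, but the $2$-factor route via Theorem~\ref{thm:2.1}(1) is cleaner and handles all three families at once.
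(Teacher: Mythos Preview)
Your argument is correct and in fact anticipates Corollary~\ref{cor:2.3}: an even-order Hamiltonian graph has a Hamilton cycle that is itself a $2$-factor with a single even component, so Theorem~\ref{thm:2.1}(1) applies. The paper, however, proves Corollary~\ref{cor:2.2} via condition~(2) rather than~(1): it observes that an odd wheel has $K_4$ as a cubic skeleton (a spanning bisubdivision of $K_4$), that a double wheel has the $3$-prism $B_6=K_3\times K_2$ as a cubic skeleton, and that the octahedron contains $B_6$ as a spanning subgraph; since $K_4$ and $B_6$ are $3$-edge-colourable, Theorem~\ref{thm:2.1}(2) finishes. Your route is more elementary and unifies the three cases under a single Hamiltonicity check, while the paper's route serves to illustrate the cubic-skeleton mechanism that condition~(2) is really about. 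One small caveat: the paper defines the double wheel only as the result of an ``expansion'' at the hub of a wheel, without fixing how the spokes are split between the two new hubs, so your insertion of both hubs into the rim tacitly assumes each hub sees two consecutive rim vertices; this holds for the example in Figure~2(b) and for any reasonable reading of ``expansion'', but is worth stating explicitly.
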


\begin{proof} First, an odd wheel $W_n$ has $K_4$ as its cubic
skeleton, that is, it has a spanning subgraph $G'$ which is a
bisubdivision of $K_4$. Second, a double wheel $G$ has the 3-prism
$B_6=K_3\times K_2$ as its cubic skeleton. Moreover, the octahedron
contains a 3-prism $B_6$ as spanning subgraphs (see Figure 2(c)).
And it is known that $K_4$ and $B_6$ in Figure 1 are 3-edge-colorable. The result follows from Theorem 2.1.
\end{proof}

Theorem \ref{thm:2.1} also implies the following.

\begin{cor}\label{cor:2.3}
A hamiltonian graph is 3PM-admissible.
\end{cor}

The well-known Tutte's theorem says that every
4-connected planar graph is hamiltonian (see \cite{Bermond78}). So we have the following.

\begin{cor}\label{cor:2.4}
Every 4-connected planar graph is
3PM-admissible.
\end{cor}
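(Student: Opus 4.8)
The plan is to chain together the two results established immediately above, so that the corollary becomes an essentially one-line consequence. First I would invoke Tutte's theorem, cited here via \cite{Bermond78}, which guarantees that every 4-connected planar graph $G$ is hamiltonian. Then I would apply Corollary~\ref{cor:2.3}, which asserts that a hamiltonian graph is 3PM-admissible. Composing these two facts yields the claim directly.

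The one point worth spelling out is why a Hamilton cycle actually certifies 3PM-admissibility, i.e.\ how Corollary~\ref{cor:2.3} itself is grounded in Theorem~\ref{thm:2.1}. Since $G$ is matching covered, it carries a perfect matching and hence has an even number of vertices. A Hamilton cycle $C$ of $G$ is therefore a spanning \emph{even} cycle, and as such it is a 2-factor of $G$ whose unique component is even. This is exactly condition~(1) of Theorem~\ref{thm:2.1}, so $G$ is 3PM-admissible. One can even exhibit the three matchings explicitly: two-color the edges of $C$ alternately to obtain perfect matchings $M_1, M_2$ and set $M_3 := M_2$, which gives $M_1 \cap M_2 \cap M_3 = \emptyset$.

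I do not anticipate a genuine obstacle here, since all the mathematical content is imported: the deep input is Tutte's theorem on the hamiltonicity of 4-connected planar graphs, and the combinatorial bridge is Corollary~\ref{cor:2.3}. The only hypothesis that must be checked before the chain applies is that $G$ lies in the ambient class of matching covered graphs---equivalently, that $G$ has even order so that its Hamilton cycle is even---after which condition~(1) of Theorem~\ref{thm:2.1} closes the argument.
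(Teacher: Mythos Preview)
Your proposal is correct and matches the paper's own argument exactly: the paper derives Corollary~\ref{cor:2.4} immediately from Tutte's theorem (every 4-connected planar graph is hamiltonian) together with Corollary~\ref{cor:2.3}. Your additional unpacking of why a Hamilton cycle satisfies condition~(1) of Theorem~\ref{thm:2.1} is accurate and more detailed than what the paper spells out, but the underlying approach is identical.
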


A graph $G$
is called a \emph { Halin graph} if it can be drawn in the plane as a
tree $T$, with all non-end-vertices having minimum degree 3,
together with a cycle $C$ passing through the end-vertices of $T$. Since  Halin graphs are hamiltonian (see Exercise 10.2.4 of \cite{Bondy08}), we have the following.

\begin{cor}\label{cor:2.5}
 Every Halin graph is 3PM-admissible.
\end{cor}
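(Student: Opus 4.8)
The plan is to obtain this as an immediate consequence of Corollary~\ref{cor:2.3} together with the classical fact that Halin graphs are hamiltonian. Since throughout we treat $G$ as a matching covered graph, the Halin graph $G$ under consideration has a perfect matching and hence an even number of vertices; this is exactly the hypothesis that makes a Hamilton cycle useful here.

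First I would invoke Exercise~10.2.4 of \cite{Bondy08}, which guarantees that every Halin graph possesses a Hamilton cycle $C$. Because $G$ has even order, $C$ is an even cycle; viewed as a spanning subgraph in which every vertex has degree two, it is precisely a $2$-factor with a single even component. Thus condition~(1) of Theorem~\ref{thm:2.1} is satisfied, and in fact this already exhibits $G$ as $3$PM-admissible via the construction in the converse direction of that theorem.

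Second, rather than re-deriving the matching construction I would simply appeal to Corollary~\ref{cor:2.3}, which packages exactly this reasoning: a hamiltonian (matching covered) graph is $3$PM-admissible. Applying it to $G$ finishes the argument, so the corollary reduces to a single citation plus one line.

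The only step carrying genuine content is the hamiltonicity of Halin graphs, which I am content to cite and which I regard as the main (indeed the only) obstacle. Were a self-contained argument required, I would prove it by induction on the underlying tree $T$: removing a lowest-level star of $T$ and contracting the corresponding arc of the outer cycle yields a smaller Halin graph, and a Hamilton cycle of the smaller graph can be rerouted to pass through the reinstated leaves along $C$. Keeping the cycle spanning while threading it through all newly exposed end-vertices is the delicate bookkeeping, but it is entirely standard and, crucially, unnecessary once Corollary~\ref{cor:2.3} is invoked.
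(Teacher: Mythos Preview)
Your proposal is correct and matches the paper's approach exactly: the paper simply states that Halin graphs are hamiltonian (citing Exercise~10.2.4 of \cite{Bondy08}) and then invokes Corollary~\ref{cor:2.3}. Your additional remarks about Theorem~\ref{thm:2.1}(1) and the inductive proof sketch of hamiltonicity are accurate but go beyond what the paper records.
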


As we know, the dodecahedral is hamiltonian. Moreover, the icosahedron is hamiltonian. In fact, the edges with
labels 1 and 2 in Figure 3 constitute a Hamilton cycle. An \emph { outerplanar graph} (it has a planar
embedding in which all vertices lie on the boundary of its outer
face) is also hamiltonian. So they are 3PM-admissible.

Let us see one more example taken from \cite{Carvalho05} whose
perfect matching polytope is independent, as shown in Figure 4. It
is hamiltonian (the edges with labels 1 and 2 in Figure 4 constitute
a Hamilton cycle). Also, it contains a 3-prism $B_6$ as its cubic
skeleton.

\begin{center}
\setlength{\unitlength}{0.3cm}
\begin{picture}(12,11)
\multiput(2,2)(8,0){2}{\circle*{0.3}}
\multiput(4,4)(4,0){2}{\circle*{0.3}}
\multiput(3,7)(6,0){2}{\circle*{0.3}}
\multiput(0,8)(6,0){3}{\circle*{0.3}}
\multiput(6,11)(0,0){1}{\circle*{0.3}}

\put(2,2){\line(1,0){8}} \put(4,4){\line(1,0){4}}
\put(6,8){\line(0,1){3}} \put(0,8){\line(2,1){6}}
\put(6,11){\line(2,-1){6}} \put(0,8){\line(3,-1){3}}
\put(0,8){\line(1,-1){4}} \put(2,2){\line(-1,3){2}}
\put(2,2){\line(2,3){4}} \put(4,4){\line(-1,3){1}}
\put(8,4){\line(1,3){1}} \put(9,7){\line(3,1){3}}
\put(8,4){\line(1,1){4}} \put(10,2){\line(-2,3){4}}
\put(10,2){\line(1,3){2}} \put(3,7){\line(3,1){3}}
\put(9,7){\line(-3,1){3}}

\put(6.1,9.2){\makebox(1,0.5)[l]{\footnotesize $1$ }}
\put(9,9.5){\makebox(1,0.5)[l]{\footnotesize $3$}}
\put(2.8,9.5){\makebox(1,0.5)[l]{\footnotesize $2$}}
\put(2,7.4){\makebox(1,0.5)[l]{\footnotesize $1$}}
\put(10,7.4){\makebox(1,0.5)[l]{\footnotesize $1$}}
\put(4.4,5){\makebox(1,0.5)[l]{\footnotesize $2$}}
\put(7,5){\makebox(1,0.5)[l]{\footnotesize $3$}}
\put(2.8,5.5){\makebox(1,0.5)[l]{\footnotesize $23$}}
\put(8.5,5.5){\makebox(1,0.5)[l]{\footnotesize $23$}}
\put(5.8,4.2){\makebox(1,0.5)[l]{\footnotesize $1$ }}
\put(0.6,4.1){\makebox(1,0.5)[l]{\footnotesize $3$}}
\put(11,4.1){\makebox(1,0.5)[l]{\footnotesize $2$}}
\put(5.8,2.2){\makebox(1,0.5)[l]{\footnotesize $1$}}

\put(-6,0){\makebox(1,0.5)[l]{\small Figure 4. A 3-connected graph
with independent polytope}}
\end{picture}
\end{center}

\section{4-regular graphs}
\label{sec: 4-regular}

Corollary 3.4.3 in \cite{Lovasz86} says that if a graph is $(k-1)$-edge-connected, $k$-regular, and has even number of vertices,  then it is matching covered.
A 3-connected 4-regular graph is 3-edge-connected and so is matching  covered.
Recall Jackson's theorem: Every 2-connected $k$-regular graph on at
most $3k$ vertices is hamiltonian (see \cite{Broer02}). From this, we
have an observation as follows.

\begin{prp}\label{prp:3.1}
Every 3-connected 4-regular graph $G$ on at most 12 even number of vertices  is 3PM-admissible.
\end{prp}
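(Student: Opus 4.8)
The plan is to reduce the statement to Jackson's theorem together with Corollary~\ref{cor:2.3}. The key observation is that the two results quoted just before the proposition fit together almost immediately. First I would note that $G$ is $4$-regular with at most $12$ vertices, and $12 = 3\cdot 4 = 3k$ with $k=4$, so $G$ has \emph{at most} $3k$ vertices. Since $G$ is $3$-connected it is in particular $2$-connected, so Jackson's theorem applies verbatim: every $2$-connected $4$-regular graph on at most $12$ vertices is hamiltonian. Thus $G$ is hamiltonian.

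The second step is purely to invoke the earlier corollary. By Corollary~\ref{cor:2.3}, every hamiltonian (matching covered) graph is 3PM-admissible. To apply it I must know that $G$ is matching covered; this is exactly the content of the sentence preceding the proposition, namely that a $3$-connected $4$-regular graph is $3$-edge-connected (hence $(k-1)$-edge-connected with $k=4$), $k$-regular, and has an even number of vertices, so Corollary~3.4.3 of \cite{Lovasz86} makes it matching covered. Having a Hamilton cycle then yields the 3PM property directly: a Hamilton cycle is an even spanning cycle (the vertex count is even), so it is a $2$-factor with a single even component, and condition~(1) of Theorem~\ref{thm:2.1} is satisfied.

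The only delicate point is a bookkeeping one rather than a genuine obstacle. The phrase ``at most $12$ even number of vertices'' should be read as ``an even number of vertices, at most $12$,'' and I would make sure the hypothesis ``even number of vertices'' is carried through, since it is needed both for matching coveredness and to guarantee that the Hamilton cycle is an even cycle. With a $4$-regular graph the number of edges is $2n$, which is automatically even, but the evenness of $n$ itself is a separate assumption that I would keep explicit. Once this is pinned down, no further computation is required.

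I expect the main (and essentially only) subtlety to be confirming that the numerical hypothesis of Jackson's theorem is met with the right inequality: I must check that $n \le 3k = 12$ rather than a strict inequality, and that $3$-connectivity legitimately implies the $2$-connectivity that Jackson's theorem requires. Both are routine, so the proof should be short; the work has effectively already been done by assembling the quoted results in the correct order.
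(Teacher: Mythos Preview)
Your proposal is correct and follows exactly the approach the paper intends: the proposition is stated as an immediate observation from Jackson's theorem (applied with $k=4$, so $3k=12$) together with Corollary~\ref{cor:2.3}, and the paper gives no further argument beyond the sentence ``From this, we have an observation as follows.'' Your additional care in verifying matching coveredness and the evenness bookkeeping is sound and, if anything, more detailed than the paper itself.
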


For example, the octahedron in Figure 2(c) is 4-regular and has 6
vertices. So it is 3PM-admissible. The following is a stronger
result.

\begin{thm}\label{thm:3.2}
Every 3-connected 4-regular simple graph $G$ on  at most 18 even number of vertices is 3PM-admissible.
\end{thm}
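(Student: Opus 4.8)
The plan is to reduce the problem, wherever possible, to the two sufficient conditions of Theorem~\ref{thm:2.1}, treating hamiltonicity merely as the easiest special case. First observe that a $3$-connected $4$-regular graph is $3$-edge-connected, hence bridgeless, so by Petersen's $2$-factor theorem $G$ splits into two edge-disjoint $2$-factors $F_1,F_2$ with $E(F_1)\cup E(F_2)=E(G)$. Since $n$ is even, a Hamilton cycle is a single even cycle, i.e.\ a $2$-factor with even components; thus Corollary~\ref{cor:2.3} is just the case of Theorem~\ref{thm:2.1}(1) in which the even $2$-factor happens to be connected. The genuine gain of this theorem over the purely hamiltonian Proposition~\ref{prp:3.1} (which, via Jackson's theorem, already covers $n\le 3k=12$) is therefore expected to come from allowing \emph{disconnected} even $2$-factors and from condition (2). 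So I would first dispose of every hamiltonian $G$ by Corollary~\ref{cor:2.3}, leaving only the finitely many non-hamiltonian $3$-connected $4$-regular simple graphs with $n\in\{14,16,18\}$ to analyse.

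For those residual graphs the core method is to manufacture an \emph{even} $2$-factor from the decomposition $G=F_1\cup F_2$. Among all $2$-factors of $G$ choose one, say $F$, with the fewest odd cycles; because $n$ is even, the number of odd cycles of $F$ is even, so if $F$ is not already even there are at least two odd cycles $C,C'$. Using that the complementary edges lie in a second $2$-factor and that $3$-connectivity forces several such edges to leave each cycle, I would locate an $F$-alternating cycle $D$ (alternating between $F$ and $E(G)\setminus E(F)$) that passes through both $C$ and $C'$; replacing $F$ by the symmetric difference $F\triangle D$ again yields a spanning $2$-factor but merges $C$ and $C'$ into even cycles, strictly lowering the odd-cycle count and contradicting minimality. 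When such a merging swap is not readily available, the fallback is condition (2): delete a suitably chosen perfect matching $M$, so that the cubic graph $G-M$ becomes a spanning bisubdivision (here a genuine subgraph) of a $3$-edge-colorable cubic graph — in practice verifying that $G-M$ is itself Class~1, or exhibiting $K_4$ or the $3$-prism $B_6$ as a cubic skeleton as in Corollary~\ref{cor:2.2}.

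The hard part will be making the swap in the previous paragraph always succeed while keeping the object a spanning $2$-factor and the odd-cycle count strictly decreasing. This is precisely where both the connectivity hypothesis and the ceiling $n\le 18$ are essential: for large $4$-regular graphs no such universal swap exists, but for $n\le 18$ the number of odd cycles (two, or four) and their possible sizes are so restricted that the required alternating cycle can be found by a short, finite case analysis organised by the odd-cycle pattern (for instance $5+\text{(odd)}$, $7+7$, $3+\cdots$). I would therefore carry out this final step as a bounded enumeration: for each admissible partition of $n\le 18$ into the lengths of the cycles of a non-even $2$-factor, either display the merging alternating cycle produced by $3$-connectivity, or, in the few stubborn configurations, fall back on exhibiting an explicit cubic skeleton and invoking Theorem~\ref{thm:2.1}(2). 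Completing and streamlining this case analysis — and confirming that no non-hamiltonian exception on at most $18$ vertices escapes both conditions — is the crux of the argument.
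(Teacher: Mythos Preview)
Your plan is a genuinely different route from the paper's, and the gap is not in the set-up but in the ``crux'' you yourself flag. The paper does \emph{not} try to repair an arbitrary $2$-factor by alternating-cycle swaps and does not enumerate non-hamiltonian graphs on $14$, $16$, $18$ vertices. Instead it fixes any perfect matching $M_1$, looks at the cubic graph $G'=G-M_1$, and asks whether $G'$ has a perfect matching. If it does, $M_1$ and that matching are disjoint and $G$ is $3$PM-admissible immediately. If it does not, the paper applies the Gallai--Edmonds structure theorem to $G'$: with $A,D,C$ as usual, each component of $G'[D]$ sends an odd number of edges to $A$, and a component sending exactly one edge must have at least $5$ vertices because $G$ is simple and cubic in $G'$. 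A short counting over the cases $|A|=1$ (with $C\neq\emptyset$), $|A|=2$, and $|A|\ge 3$ then forces $|V(G)|\ge 20$; and the sole surviving configuration $|A|=1$, $C=\emptyset$ is dispatched directly using $3$-connectivity to pick two $M_1$-edges $e,f$ between the three factor-critical pieces and building $M_2,M_3$ through them. That argument is what produces the number $18$: it is a structural lower bound, not the residue of a case enumeration.

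By contrast, your swap step is not yet an argument. Even granting that an $F$-alternating cycle $D$ meeting two odd cycles $C,C'$ exists, $F\triangle D$ can simultaneously split an even cycle of $F$ into odd pieces or reattach fragments of $C,C'$ in a parity-bad way, so the odd-cycle count need not drop; you would have to control the full intersection pattern of $D$ with every cycle of $F$, not just $C$ and $C'$. Your fallback (``verify $G-M$ is Class~1'' or ``exhibit a cubic skeleton'') is exactly the original problem restated for a cubic subgraph, and there is no mechanism offered for why it succeeds on the stubborn configurations. Finally, appealing to ``finitely many non-hamiltonian $3$-connected $4$-regular graphs on $14,16,18$ vertices'' is not a proof step unless you actually generate and check them; the paper avoids any such enumeration. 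If you want to salvage your line, the cleanest fix is precisely the paper's: forget swaps, remove one perfect matching, and let Gallai--Edmonds do the counting.
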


\begin{proof}  Let $M_1$ be a perfect matching of $G$ and let
$G'=G-M_1$. Then $G'$ is a cubic subgraph of $G$. If $G'$ has a perfect
matching $M_2$, then $G$ has two disjoint
perfect matchings $M_1$ and $M_2$. Thus $G$ is 3PM-admissible. In the
following, assume that $G'$ has no perfect matchings.

We shall apply Gallai-Edmonds structure theorem (see
\cite{Lovasz86}) to $G'$. Denote by $D$ the set of all vertices not covered
by at least one maximum matching of $G'$, by $A$ the set of neighbours of $D$ in $V(G')\setminus
D$, and by $C$ the set of all other vertices of $G'$. Then \\
{\indent} (a) each component of $G'[D]$ is factor critical;\\
{\indent} (b) $G'[C]$ has a perfect matching;\\
{\indent} (c) any maximum matching in $G'$ contains a perfect
matching in $G'[C]$ and near-perfect matchings of components of
$G'[D]$, and matches all vertices of $A$ to distinct components of
$G'[D]$.

Here, a graph $H$ is \emph { factor critical} if $H-v$ has
a perfect matching for each $v\in V(H)$,  and a matching of $H$ is
\emph { near perfect} if it covers all but one vertex in $H$.

Let $D'$ be the vertex set of a component of $G'[D]$, and let $t$ be the number of edges in $G'$ connecting $A$ and $D'$. Then $G'[D']$ is factor critical, and so $|D'|$ is odd. Recall that $G'$ is a cubic graph. We have $3|D'|=2|E(G'[D'])|+t$. This implies that $t$ is odd.  Since $G$ is simple, if  $t=1$, then $|D'|\geq 5$. Let $\omega_1$ denote the number of components of $G'[D]$ each of which is connected by only one edge to $A$. Let $\omega$ denote the number of components of $G'[D]$. Since $G'$ has no perfect matchings, by Gallai-Edmonds structure theorem, we have $\omega>|A|$. Since the number of vertices of $G'$ is even, $\omega$ and $|A|$ have the same parity, and so  $\omega\geq |A|+2$.

When $|A|=1$, we have $\omega\geq 3$. Since $G'$ is cubic, we have $\omega=\omega_1=3$. Let $u$ be the vertex in $A$, $G_1,G_2,G_3$  the three components in $G'[D]$, and $x\in V(G_1), y\in V(G_2), z\in V(G_3)$  three neighbours of $u$. Then $|V(G_i)|\geq 5$, $i=1,2,3$. Moreover, by the definition of $C$, in this case $G'$ is the disjoint union of $G'[C]$ and $G'[A\cup D]$, both of which are cubic.

If $C=\emptyset$,  then $G'-u=G'[D]$ (see an example in Figure 5).  Since $G$ is 3-connected, $G-u$ is connected. Thus there exist at least two edges of $M_1$, say  $e$ and $f$, which connect the components $G_1,G_2,G_3$. Suppose, without loss of generality, that $e$ connects $G_1$ and $G_2$ and $f$ connects $G_1$ and $G_3$. Let $G^*=G'+e+f$. Since $G_1,G_2,G_3$ are factor-critical, there exists a perfect matching $M_2$ of $G^*$ containing $\{e,uz\}$, and  a perfect matching $M_3$ of $G^*$ containing $\{f,uy\}$. Then
$M_1$, $M_2$, and $M_3$ are three perfect matchings of $G$ such that
$M_1\cap M_2=\{e\}$, $M_1\cap M_3=\{f\}$, and  $M_2\cap M_3$ may be
nonempty. However, $M_1\cap M_2\cap M_3=\emptyset$. Therefore, $G$
is 3PM-admissible.

If $G$ is not 3PM-admissible, then either  $|A|=1$ and $C\neq\emptyset$ or    $|A|\geq 2$. For the former case, noting that  $G'[C]$ is cubic and $G$ is simple, there are at least four vertices in $C$. Thus $|V(G)|=|V(G')|=|C|+|A|+\sum_{i=1}^3|V(G_i)|\geq 20$.
For the latter case, when $|A|=2$,  we have $\omega \geq 4$. Combining the fact that the number of edges in $G'$ connecting $A$ and  a component of $G'[D]$ is odd and $G'$ is a cubic graph, we have $\omega_1\geq 3$. If $\omega_1= 3$, then $\omega= 4$ and there is a component $D''$ of $G'[D]$ such that there are three edges in $G'$ connecting $A$ and $D''$. Since $G'$ is simple and $|D''|$ is odd, we have $|D''|\geq 3$. So $|V(G)|\geq |A|+ |D''|+5\omega_1 \geq 20$.  If $\omega_1\geq 4$, then $|V(G)|\geq |A|+5\omega_1 \geq 22$. When $|A|\geq 3$, we have $\omega \geq 5$. By counting the number of edges which connect $A$ and $D$ in two ways, we have $\omega_1+3(\omega-\omega_1)\leq 3|A|$. Thus $\omega_1\geq \frac{3}2(\omega-|A|)\geq 3$, and so $|V(G)|\geq |A|+(\omega-\omega_1)+5\omega_1=|A|+\omega+4\omega_1\geq 20$.
Therefore, a graph with at most 18 vertices admits the 3PM property.
\end{proof}

\begin{center}
\setlength{\unitlength}{0.3cm}
\begin{picture}(12,14)
\multiput(0,2)(4,0){4}{\circle*{0.3}}
\multiput(2,4)(8,0){2}{\circle*{0.3}}
\multiput(0,6)(4,0){4}{\circle*{0.3}}
\multiput(6,8)(0,2){4}{\circle*{0.3}}
\multiput(3,12)(6,0){2}{\circle*{0.3}}

\put(0,2){\line(1,0){4}} \put(0,6){\line(1,0){4}}
\put(0,2){\line(0,1){4}} \put(4,2){\line(0,1){4}}
\put(0,2){\line(1,1){2}} \put(0,6){\line(1,-1){4}}
\put(8,2){\line(1,0){4}} \put(8,6){\line(1,0){4}}
\put(8,2){\line(0,1){4}} \put(12,2){\line(0,1){4}}
\put(12,2){\line(-1,1){2}} \put(8,2){\line(1,1){4}}
\put(6,8){\line(-1,-1){2}} \put(6,8){\line(1,-1){2}}
\put(6,8){\line(0,1){2}} \put(3,12){\line(3,-2){3}}
\put(3,12){\line(1,0){6}} \put(3,12){\line(3,2){3}}
\put(9,12){\line(-3,-2){3}} \put(9,12){\line(-3,2){3}}
\put(6,12){\line(0,1){2}} \bezier{30}(3,12)(-2,10)(0,6)
\bezier{36}(9,12)(16,7)(12,2)

\put(6.3,8){\makebox(1,0.5)[l]{\footnotesize $u$ }}
\put(6.3,9.5){\makebox(1,0.5)[l]{\footnotesize $x$}}
\put(4.3,5.6){\makebox(1,0.5)[l]{\footnotesize $y$}}
\put(7.2,5.6){\makebox(1,0.5)[l]{\footnotesize $z$}}

\put(7,13.5){\makebox(1,0.5)[l]{\footnotesize $G_1$ }}
\put(-2,3.3){\makebox(1,0.5)[l]{\footnotesize $G_2$}}
\put(6.3,3.3){\makebox(1,0.5)[l]{\footnotesize $G_3$}}
\put(-0.2,8){\makebox(1,0.5)[l]{\footnotesize $e$}}
\put(13,8){\makebox(1,0.5)[l]{\footnotesize $f$}}

\put(-4,0){\makebox(1,0.5)[l]{\small Figure 5. Cubic graph without
perfect matching}}
\end{picture}
\end{center}

\section{Concluding remarks}
\label{sec:remarks}

To look for 3PM-admissible graphs, traversing from cubic graphs to
matching covered graphs, we can see some connections and some new
features. Many problems remain to be investigated. \\
{\indent} $\bullet$ The concept of 3PM-admissible graphs is a
generalization (relaxation) of that of the hamiltonian graphs. At
the beginning we introduce five polyhedral graphs, the platonic
graphs. They are all hamiltonian. In general, a graph is polyhedral
if and only if it is planar and 3-connected (see \cite{Bermond78}).
Tutte presented a counterexample to show that a polyhedral graph is
not necessarily hamiltonian. However, this counterexample is cubic
and is 3PM-admissible. So it is not a counterexample for the
statement that every polyhedral graph is 3PM-admissible. We can ask if
this statement holds true. \\
{\indent} $\bullet$ Jackson's theorem asserts that every 2-connected
$4$-regular graph on at most 12 vertices is hamiltonian. Further,
Jackson conjectured that every 3-connected $4$-regular graph on at
most 16 vertices is hamiltonian (see \cite{Broer02}). Now, we obtain
an easier assertion that every 3-connected $4$-regular graph on at
most 18 vertices is 3PM-admissible. Can we further improve this upper
bound? \\
{\indent} $\bullet$ For a cubic graph $G$, we have proved that if
the perfect matching polytope is independent, then $G$ is
3PM-admissible. In Figure 4, we show a 3-connected graph with
independent polytope to be 3PM-admissible. Can we prove this for
every 3-connected graph?

\acknowledgements
\label{sec:ack}
The authors would like to thank
the anonymous referees for their helpful comments on improving the
representation of the paper.

\nocite{*}

\end{document}